\newtheorem{theorem}{Theorem}[section] 
       \newtheorem{lemma}[theorem]{Lemma}
       \theoremstyle{definition} 
       \newtheorem{definition}[theorem]{Definition}
\newcommand\reallywidehat[1]{%
\savestack{\tmpbox}{\stretchto{%
  \scaleto{%
    \scalerel*[\widthof{\ensuremath{#1}}]{\kern-.6pt\bigwedge\kern-.6pt}%
    {\rule[-\textheight/2]{1ex}{\textheight}}
  }{\textheight}%
}{0.5ex}}%
\stackon[1pt]{#1}{\tmpbox}%
}
\begin{document}

\author{Stephen Deterding \\ Department of Mathematics, University of Kentucky
\thanks{Email address: deterding@uky.edu}}
\date{}

\title{Bounded point derivations on $R^p(X)$ and approximate derivatives}

\maketitle

\begin{abstract} It is shown that if a point $x_0$ admits a bounded point derivation on $R^p(X)$, the closure of rational function with poles off $X$ in the $L^p(dA)$ norm, for $p >2$, then there is an approximate derivative at $x_0$. A similar result is proven for higher order bounded point derivations. This extends a result of Wang which was proven for $R(X)$, the uniform closure of rational functions with poles off $X$.

\end{abstract}

\section{Introduction}

Let $X$ be a compact subset of the complex plane. Let $C(X)$ denote the set of all continuous functions on $X$ and let $R(X)$ be the subset of $C(X)$ that consists of all function in $C(X)$ which on $X$ are uniformly approximable by rational functions with poles off $X$. We denote by $R^p(X)$, $1 \leq p < \infty$, the closure of the rational functions with poles off $X$ in the $L^p$ norm where the underlying measure is $2$ dimensional Lebesgue (area) measure. It follows from H\"{o}lder's inequality that the uniform norm is more restrictive than the $L^p$ norm and thus $R(X) \subseteq R^p(X)$. 

\bigskip

The space $R^p(X)$ was originally studied as part of the following question of rational approximation: what are the necessary and sufficient conditions so that $R^p(X) = L^p(X)$? It is straightforward to show that $R^p(X) \neq L^p(X)$ unless $X$ has empty interior, so from now on, we will make this assumption. The following results are well known: if $1 \leq p<2$, then $R^p(X) = L^p(X)$ \cite{Sinanjan2}, and if $p \geq 2$ then there is a necessary and sufficient condition for $R^p(X) = L^p(X)$ involving Sobolev $q$-capacity \cite[Theorem 6]{Fernstrom} \cite{Hedberg2}


\bigskip

In this paper, we consider a different kind of approximation problem for $R^p(X)$. Since rational functions with poles off $X$ are smooth, but functions in $R^p(X)$ may not be differentiable at all, it is natural to ask how much is the differentiability of rational functions preserved under convergence in the $L^p$ norm. The primary tool for answering this question is that of a bounded point derivation. For a non-negative integer $t$, we say that $R^p(X)$ has a bounded point derivation of order $t$ at $x_0$ if there exists a constant $C> 0$ such that $|f^{(t)}(x_0)| \leq C ||f||_p$ for all rational functions $f$ with poles off $X$. If $t = 0$, we take the $0$-th order derivative to be the evaluation of the function at $x_0$. For this reason, a $0$-th order bounded point derivation is usually called a bounded point \textit{evaluation}. Bounded point evaluations have been widely studied in both rational approximation theory and operator theory. (See for instance \cite{Brennan3}, \cite{Fernstrom}, and \cite{Hedberg}) 

\bigskip

If $f$ is a function in $R^p(X)$ then there is a sequence $\{f_j\}$ of rational functions with poles off $X$ that converges to $f$ in the $L^p$ norm. If there is a bounded point derivation at $x_0$ then $|f_j^{(t)}(x_0) -f_k^{(t)}(x_0)| \leq C ||f_j -f_k||_p $, which tends to $0$ as $j$ and $k$ tend to infinity. Thus $\{f_j^{(t)}(x_0) \}$ is a Cauchy sequence and hence converges. Hence the map $f \to f^{(t)}(x_0)$ can be extended from the space of rational functions with poles off $X$ to a bounded linear functional on $R^p(X)$, which we denote as $D_{x_0}^t$. It follows that $D_{x_0}^t f = \displaystyle \lim_{j \to \infty} f_j^{(t)}(x_0)$, where $\{f_j\}$ is a sequence of rational functions which converges to $f$ in the $L^p$ norm. Note that the value of $D_{x_0}^t f$ does not depend on the choice of this sequence.

\bigskip



Thus bounded point derivations generalize the notion of a derivative to functions in $R^p(X)$ which may not be differentiable. In fact, it is a result of Dolzhenko \cite{Dolzhenko} that there is a nowhere differentiable function in $R(X)$, and hence also $R^p(X)$, whenever $X$ is a set with no interior. For this reason, it is important to understand the relationship between bounded point derivations and the usual notion of the derivative. This problem was first considered by Wang \cite{Wang} in the case of uniform rational approximation. Wang showed that the existence of a bounded point derivation on $R(X)$ at $x_0$ implies that every function in $R(X)$ has an approximate derivative at $x_0$. An approximate derivative is defined in the same way as the usual derivative, except that the limit is taken over a subset with full area density at $x_0$ rather than over all points of $X$. We recall what it means for a set to have full area density at $x_0$. Let $x_0$ be a point in the complex plane, let $\Delta_n(x_0)$ denote the ball centered at $x_0$ with radius $\frac{1}{n}$ and let $m$ denote $2$ dimensional Lebesgue measure. A set $E$ is said to have  full area density at $x_0$ if$ \displaystyle  \lim_{n \to \infty} \dfrac{m (\Delta_n(x_0) \setminus E)}{m(\Delta_n(x_0))} = 0$. Wang also proved a similar result for higher order bounded point derivations. The goal of this paper is to extend Wang's results to functions in $R^p(X)$. Our first result is the following theorem.

\begin{theorem}
\label{main}

For $2 < p < \infty$, suppose that there is a bounded point derivation on $R^p(X)$ at $x_0$ denoted by $D_{x_0}^1$. Then given a function $f$ in $R^p(X)$, there exists a set $E$ of full area density at $x_0$ such that 
\[ \lim_{x \to x_0, x \in E} \left|\dfrac{f(x) - f(x_0)}{x-x_0} - D_{x_0}^1 f\right| = 0 \]

\end{theorem}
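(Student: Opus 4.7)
My plan is to adapt Wang's strategy from the uniform setting to $L^p$ by replacing pointwise control with a local $L^p$ estimate on small disks around $x_0$, then converting the integral estimate to an approximate-pointwise statement via Chebyshev's inequality on dyadic annuli. The hypothesis $p > 2$ enters to ensure that the pointwise values of the elements of $R^p(X)$ appearing in the difference quotient are unambiguous.

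First I would fix $f \in R^p(X)$ and choose rationals $r_j$ with poles off $X$ converging to $f$ in the $L^p$ norm. Since the existence of a first-order bounded point derivation at $x_0$ implies a bounded point evaluation there, the values $f(x_0) := \lim_j r_j(x_0)$ and $D_{x_0}^1 f := \lim_j r_j'(x_0)$ are well defined. Setting
\[ h_j(x) = \dfrac{r_j(x) - r_j(x_0)}{x - x_0} - r_j'(x_0), \]
I expect the $h_j$ to converge locally in $L^p$ to a function $h$ that represents, almost everywhere, the expression inside the absolute value in the statement; proving the theorem is then equivalent to showing that $h(x) \to 0$ on a set of full area density at $x_0$.

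The crux of the argument is a local estimate of the form
\[ \int_{\Delta_n(x_0)} \left|\dfrac{r(x) - r(x_0)}{x - x_0} - r'(x_0)\right|^p dA(x) \leq C_n ||r||_p^p \qquad (r \text{ rational, poles off } X), \]
with $C_n / m(\Delta_n) \to 0$ as $n \to \infty$. I would try to extract this from the boundedness of the functionals $r \mapsto r(x_0)$ and $r \mapsto r'(x_0)$ combined with a Cauchy-integral representation of the bracket, localized to $\Delta_n$. Once established, this inequality passes to the limit in $f$, and Chebyshev on the dyadic annuli $A_k = \{x : 2^{-k-1} < |x - x_0| < 2^{-k}\}$ yields $m(\{x \in A_k : |h(x)| > \epsilon\}) \leq \epsilon^{-p} C_k ||f||_p^p$, so the relative measure of $\{|h| > \epsilon\}$ in $\Delta_n(x_0)$ tends to zero for every $\epsilon > 0$. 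A standard diagonal argument over a sequence $\epsilon_j \downarrow 0$ then produces the required set $E$ of full area density.

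The step I expect to be the main obstacle is the local estimate above, with its decay $C_n/m(\Delta_n) \to 0$: in Wang's uniform setting the bounded point derivation delivers pointwise bounds directly, whereas in $L^p$ only integral information is available, so the decay must be squeezed out by testing against carefully chosen functions or measures supported in $\Delta_n$ and invoking duality with $D_{x_0}^1$. The hypothesis $p > 2$, equivalent to dual exponent $q < 2$ and hence to continuity of the Cauchy transform of compactly supported $L^q$ data, is what makes the pointwise interpretations of $f$, $r_j$, and $h$ in these arguments well posed. The higher-order analogue would then follow by the same scheme applied to the Taylor remainder of degree $t$ in place of the linear one.
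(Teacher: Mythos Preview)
Your proposal leaves the decisive step unproven: the averaged estimate
\[
\frac{1}{m(\Delta_n(x_0))}\int_{\Delta_n(x_0)} \left|\dfrac{r(x) - r(x_0)}{x - x_0} - r'(x_0)\right|^p dA(x) \;\longrightarrow\; 0
\]
uniformly over rationals $r$ with $\|r\|_p\le 1$. You flag this as the ``main obstacle,'' and your suggested route---a Cauchy-integral representation of the bracket together with the boundedness of $r\mapsto r(x_0)$ and $r\mapsto r'(x_0)$---does not produce it. Those two functionals control values only at the fixed point $x_0$, not at the moving point $x$; a contour Cauchy formula for $r$ depends on the location of the poles of $r$ and cannot yield a bound in terms of $\|r\|_p$ alone. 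Without this estimate the passage from $h_j$ to $h$ in $L^p(\Delta_n)$ and the subsequent Chebyshev argument both collapse.

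The paper sidesteps this entirely by working in the dual and constructing $E$ \emph{before} any estimate. Let $k_1\in L^q(X)$ represent $D_{x_0}^1$, so $k=(z-x_0)k_1$ represents $D_{x_0}^0$. The set $E$ is defined explicitly as those $x$ with $\int_X |(x-x_0)k_1/(z-x)|^q\,dA<\delta_0$, $\int_X |(x-x_0)k/(z-x)|^q\,dA<\delta_0$, and $|x-x_0|\tilde k(x)<\delta_0$; a dominated-convergence argument (this is precisely where $q<2$ enters, through local integrability of $|z|^{-q}$) shows $E$ has full area density at $x_0$. For $x\in E$ one then invokes Bishop's trick: the measure $c^{-1}(z-x_0)k(z)(z-x)^{-1}\,dA$, with $c=1+(x-x_0)\hat k(x)$ bounded away from $0$ on $E$, is a representing measure for $x$. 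Applying it to $g-g_j$ (where $g=f-D_{x_0}^0f-D_{x_0}^1f\cdot(z-x_0)$) and using $D_{x_0}^0(g-g_j)=D_{x_0}^1(g-g_j)=0$ peels off two factors of $x-x_0$ and yields the uniform pointwise bound $|g(x)-g_j(x)|\le C|x-x_0|\,\|g-g_j\|_p$ on $E$, with $C$ independent of $x$ and $j$. That bound plus $L_x(g_j)\to 0$ finishes the proof. The idea you are missing is the construction of representing measures at the nearby points $x$ out of the one at $x_0$, and the recognition that the set where this construction is well behaved is itself the required $E$.
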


\bigskip

We remark that this theorem is only valid for $2 < p < \infty$. Recall that when $1 \leq p < 2$, $R^p(X) = L^p(X)$ and thus there are no bounded point derivations on $R^p(X)$. In fact there are not even bounded point \textit{evaluations} \cite[Lemma 3.5]{Brennan3}. This still leaves open the case of $p =2$. It is possible for bounded point derivations on $R^2(X)$ to exist; however, we do not know whether Theorem \ref{main} still holds for $R^2(X)$.

\bigskip

We will also prove the following higher order extension of Theorem \ref{main}. The quantity $\Delta_h^t f(x_0)$ is the $t$-th order difference quotient of $f$ at $x_0$ and $h$, which is defined in the next section.

\begin{theorem}
\label{higherorder}

Let $t$ be a positive integer. For $2 < p < \infty$ suppose that there exists a bounded point derivation of order $t$ on $R^p(X)$ at $x_0$ denoted by $D_{x_0}^t$. Then given a function $f$ in $R^p(X)$ there exists a set $E'$ with full area density at $0$, such that 
\[ \lim_{h \to 0, h \in E'} \left|\Delta_h^t f(x_0) -D_{x_0}^tf \right|= 0\]

\end{theorem}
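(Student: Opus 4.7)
The plan is to adapt the proof of Theorem \ref{main} (the case $t = 1$) to the $t$-th order difference quotient $\Delta_h^t$. After translation I may assume $x_0 = 0$, and the argument proceeds in three steps: an integral representation of the functional $L_h(f) := \Delta_h^t f(0) - D_0^t f$, a Vitushkin localization near $0$, and a dyadic construction of the full-density set $E'$.

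First, for rational functions $g$ with poles off $X$ I would derive an integral representation for $L_h(g)$ against an annular kernel centered at $0$ with scale $|h|$. Combined with the boundedness of $D_0^t$, this representation extends $L_h$ by continuity to a bounded linear functional on $R^p(X)$ (with operator norm depending on $h$) and provides the tool for estimating $|L_h(f)|$ for general $f \in R^p(X)$. I would then apply Vitushkin's localization operator: given a smooth cutoff $\phi_r$ supported in a disk of radius $r$ centered at $0$, set $f_r := T_{\phi_r} f$. The contribution of $f - f_r$ to $L_h(f)$ is handled by a direct kernel estimate, since the relevant supports are separated from $0$; the contribution of $f_r$ is controlled using the Sobolev $q$-capacity ($1/p + 1/q = 1$) of $X^c$ in the small disk, exactly as in Theorem \ref{main}. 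Choosing $r$ comparable to $|h|$ balances these two contributions.

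The set $E'$ is then built dyadically. Let $B_n$ be the set of $h$ with $2^{-n-1} \leq |h| \leq 2^{-n}$ for which $|L_h(f)| > \epsilon_n$, with $\epsilon_n \to 0$ chosen to be summable against the capacity bounds from the previous step, and let $E'$ be the complement in a neighborhood of $0$ of $\bigcup_n B_n$. Summing the measures $m(B_n)$ as in the proof of Theorem \ref{main} then yields $m(\Delta_n(0) \setminus E')/m(\Delta_n(0)) \to 0$, so $E'$ has full area density at $0$.

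The main obstacle is the localization step. The $t$-th order divided difference involves $t+1$ evaluation points and its natural kernel has a singularity of order $1/z^{t+1}$ at $0$, more severe than the $1/z^2$ singularity in the first-order case. Deriving a bound on $|L_h(f_r)|$ in terms of the $q$-capacity of $X^c$ near $0$ sharp enough to produce a summable exceptional measure in the final step will require sharper annular estimates and a careful use of Vitushkin-type localization adapted to the higher-order setting.
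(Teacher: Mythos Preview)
Your proposal does not match the paper's approach, and more importantly it is not a proof but an outline whose main step you yourself flag as unresolved.

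First, the references to ``exactly as in Theorem \ref{main}'' are off: the paper's proof of Theorem \ref{main} uses neither Vitushkin localization nor Sobolev $q$-capacity nor any dyadic bad-set construction. The paper works entirely with representing measures. A bounded point derivation of order $t$ gives $k_t\in L^q(X)$ with $\int F\,k_t\,dA = F^{(t)}(x_0)$ for rational $F$; from this, $k=\frac{1}{t!}(z-x_0)^t k_t$ represents evaluation at $x_0$ (Lemma \ref{Wilken2}). Bishop's trick (Lemma \ref{Bishop1}) then builds, for each $x$ near $x_0$, an explicit representing measure $k_x\,dA$. Subtracting the Taylor polynomial of $f$ so that $D_{x_0}^m g = 0$ for $0\le m\le t$, and using the elementary identity $\frac{1}{z-x}=\sum_{m=1}^t\frac{(x-x_0)^{m-1}}{(z-x_0)^m}+\frac{(x-x_0)^t}{(z-x)(z-x_0)^t}$ (Lemma \ref{factorization}), one gets $|g(x)-g_j(x)|\le C|x-x_0|^t\|g-g_j\|_p$ uniformly for $x$ in a set $E$ of full area density at $x_0$ determined purely by $L^q$-smallness conditions on $k$ and $k_t$ (Theorem \ref{exceptional set}). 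The set $E'$ is then just $\bigcap_{s=1}^t\{h:x_0+sh\in E\}$. No localization operator, no capacity, no dyadic summation.

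Second, your own plan has a genuine gap where it matters. You write that handling $f_r$ ``will require sharper annular estimates and a careful use of Vitushkin-type localization adapted to the higher-order setting,'' which is to say you have not established the bound you need. The $1/z^{t+1}$ singularity you mention is exactly the point: without a concrete replacement for the paper's factorization-and-cancellation mechanism (which kills the intermediate singular terms using $D_{x_0}^m g = 0$), there is no argument here, only a hope. The paper's route avoids this difficulty entirely by never introducing a localization operator; the cancellation is algebraic and comes for free once you subtract the Taylor polynomial and expand $1/(z-x)$ in powers of $(x-x_0)/(z-x_0)$.
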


\bigskip

The outline of the rest of the paper is as follows. In the next section we consider higher order difference quotients and approximate derivatives. In Section $3$ we briefly review a few concepts from measure theory which are fundamental to our proofs, and Section $4$ is devoted to the construction of a set of full area density at $x_0$ which is needed for the proof of the main result. We present the proofs of Theorems \ref{main} and \ref{higherorder} in Sections $5$ and $6$ respectively.

\section{Higher order approximate derivatives}

 Intuitively, a higher order approximate derivative at $x_0$ should be defined in the same way as a higher order derivative except that the limit of the difference quotient should be taken over a set with full area density at $x_0$. However, a function in $R^p(X)$ may not have derivatives of any orders and thus we cannot define an approximate higher order derivative in terms of any of the lower order derivatives. Hence we will use the following definition for higher order difference quotients.

\begin{definition}
\label{t-th diff quot}
Let $t$ be a positive integer, let $f$ be a function in $R^p(X)$, let $x_0$ be a point in $X$, and choose $h \in \mathds{C}$ so that $f$ is defined at $x_0 + sh$ for $s = 0, 1, ..., t$.  The { \bf $t$-th order difference quotient of $f$ at $x_0$ and $h$} is denoted by $\Delta_h^t f(x_0)$ and defined by 

\[\Delta_h^t f(x_0) =  h^{-t} \sum_{s = 0}^{t} (-1)^{t-s} \binom{t}{s} f(x_0 + sh) \]

\end{definition}

\bigskip

For this definition to be reasonable, it should agree with the usual definition for higher order derivatives when $f$ has derivatives of all orders. 

\begin{theorem}
\label{diff quot thm}

Suppose that $f$ has derivatives of all orders on a neighborhood of $x_0$. Then for all positive integers $t$, $f^{(t)}(x_0) = \displaystyle \lim_{h \to 0} \Delta_h^t f(x_0)$.

\end{theorem}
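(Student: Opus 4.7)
The plan is to expand each term $f(x_0+sh)$ in Definition \ref{t-th diff quot} by a Taylor polynomial of degree $t$ around $x_0$ and then compare coefficients using a standard finite-difference identity. First I would write
\[ f(x_0 + sh) = \sum_{k=0}^{t} \frac{f^{(k)}(x_0)}{k!}(sh)^k + R_s(h), \]
where $R_s(h) = o(h^t)$ as $h \to 0$ (uniformly in $s \in \{0,1,\dots,t\}$ since there are only finitely many such $s$). Substituting this into $\Delta_h^t f(x_0)$ and interchanging the finite sums gives
\[ \Delta_h^t f(x_0) = \sum_{k=0}^{t} \frac{f^{(k)}(x_0)}{k!}\, h^{k-t} \left(\sum_{s=0}^{t} (-1)^{t-s} \binom{t}{s} s^k\right) + h^{-t}\sum_{s=0}^{t}(-1)^{t-s}\binom{t}{s}R_s(h). \]

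The key algebraic fact I would invoke is the combinatorial identity
\[ \sum_{s=0}^{t} (-1)^{t-s} \binom{t}{s} s^k = \begin{cases} 0 & \text{if } 0 \leq k < t, \\ t! & \text{if } k = t. \end{cases} \]
This can be proved by recognizing the sum as the $t$-th forward difference of the monomial $x^k$ evaluated at $0$, since the forward difference operator lowers the degree of a polynomial by one, and is $t!$ on $x^t$. Applying this identity kills every term in the main sum except the $k=t$ term, which contributes exactly $f^{(t)}(x_0)$.

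It then remains to show that the remainder $h^{-t}\sum_{s=0}^{t}(-1)^{t-s}\binom{t}{s}R_s(h)$ tends to $0$ as $h \to 0$. Since $R_s(h) = o(h^t)$ uniformly for $s = 0, 1, \ldots, t$, and the binomial coefficients are bounded constants, the sum is $o(1)$ and so vanishes in the limit. This completes the proof.

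The main potential obstacle is justifying the finite-difference identity and ensuring the uniform $o(h^t)$ bound on the Taylor remainders; both are routine given that $f$ is assumed to have derivatives of all orders on a fixed neighborhood of $x_0$, so that for $h$ small enough every $x_0 + sh$ lies in that neighborhood and the Peano (or integral) form of the remainder applies uniformly in $s$.
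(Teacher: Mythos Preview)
Your proof is correct and takes a genuinely different route from the paper's. The paper argues by induction on $t$: it verifies the base case $t=1$ directly, and for the inductive step it shows the operator identity $\Delta_h^1 \circ \Delta_h^{t-1} = \Delta_h^t$ by a direct manipulation of the binomial sums using Pascal's rule $\binom{t-1}{s-1}+\binom{t-1}{s}=\binom{t}{s}$. Your argument instead Taylor-expands each $f(x_0+sh)$ to order $t$ and kills all but the top term with the finite-difference identity $\sum_{s=0}^t (-1)^{t-s}\binom{t}{s}s^k = t!\,\delta_{k,t}$ for $0\le k\le t$. Your approach is more direct and yields an explicit $o(1)$ error term; it also shows that only $t$-fold differentiability at $x_0$ (for the Peano remainder) is really needed, rather than the full smoothness hypothesis. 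The paper's approach, on the other hand, isolates the recursive structure $\Delta_h^t = \Delta_h^1\circ\Delta_h^{t-1}$, which is a useful identity in its own right. Both are valid; yours is arguably cleaner for the present purpose.
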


\begin{proof}

The proof is by induction. Since $\Delta_h^1 f(x_0) = \dfrac{f(x_0+h)-f(x_0)}{h}$ the theorem is true for $t = 1$. Now assume that $f^{(t-1)}(x_0) = \displaystyle \lim_{h \to 0} \Delta_h^{t-1} f(x_0)$. Then 

\[f^{(t)}(x_0) = \displaystyle \lim_{h \to 0} \dfrac{  \Delta_h^{t-1} f(x_0+h) -  \Delta_h^{t-1} f(x_0)}{h}  = \lim_{h \to 0} \Delta_h^1 \circ \Delta_h^{t-1}f(x_0)\] 

\bigskip

Thus to show that $f^{(t)}(x_0) = \displaystyle \lim_{h \to 0} \Delta_h^{t} f(x_0)$ it is enough to prove that $\Delta_h^1 \circ \Delta_h^{t-1}f(x_0) = \Delta_h^{t}f(x_0)$. It follows from Definition \ref{t-th diff quot} that

\[ \Delta_h^1 \circ \Delta_h^{t-1}f(x_0) =  h^{-t} \left\{ \sum_{s = 0}^{t-1} (-1)^{t-1-s} \binom{t-1}{s} f(x_0 + (s+1)h) -  \sum_{s = 0}^{t-1} (-1)^{t-1-s} \binom{t-1}{s} f(x_0 + sh)\right\}\]

\bigskip

A change of variable of $s = s-1$ in the first sum yields

\[ \Delta_h^1 \circ \Delta_h^{t-1}f(x_0) =  h^{-t} \left\{\sum_{s = 1}^{t} (-1)^{t-s} \binom{t-1}{s-1} f(x_0 + sh) -  \sum_{s = 0}^{t-1} (-1)^{t-1-s} \binom{t-1}{s} f(x_0 + sh)\right\}\]

\bigskip
Multiplying the second sum by $(-1)$ changes the subtraction to addition.
Then moving the $t$-th term of the first sum outside the sum and doing the same to the $0$-th term of the second sum yields

\[ 
\begin{split}
\Delta_h^1 \circ \Delta_h^{t-1}f(x_0) &=  h^{-t} \left\{ f(x_0+ th) + \sum_{s = 1}^{t-1} (-1)^{t-s} \binom{t-1}{s-1} f(x_0 + sh)  \right.\\  &\quad \left. {} +\sum_{s = 1}^{t-1} (-1)^{t-s} \binom{t-1}{s} f(x_0 + sh) + (-1)^{t} f(x_0)\vphantom{ h^{-t}}\right\}
\end{split}
\]

\bigskip

The two sums can be combined using the binomial identity $\binom{t-1}{s-1} + \binom{t-1}{s} = \binom {t}{s}$. Hence

\[ \Delta_h^1 \circ \Delta_h^{t-1}f(x_0) =  h^{-t} \left\{ f(x_0+ th) + \sum_{s = 1}^{t-1} (-1)^{t-s} \binom{t}{s} f(x_0 + sh) + (-1)^{t} f(x_0)\right\}\]

\bigskip

 In addition since $\binom{t}{0} = 1$ and $\binom{t}{t} =1$ the two terms outside the sum can be put back into the sum and thus

\[ \Delta_h^1 \circ \Delta_h^{t-1}f(x_0) = h^{-t} \sum_{s = 0}^{t} (-1)^{t-s} \binom{t}{s} f(x_0 + sh) = \Delta_h^tf(x_0)\]

\end{proof}

\bigskip

We now define higher order approximate derivatives using Definition \ref{t-th diff quot}.

\begin{definition}

Let $t$ be a positive integer. A function $f$ in $R^p(X)$ has an {\bf approximate derivative of order $t$} at $x_0$ if there exists a set $E'$ with full area density at $0$, and a number $L$ such that 

\[ \lim_{h \to 0, h \in E'} \left|\Delta_h^t f(x_0) - L\right| = 0\]

\bigskip

\noindent
We say that $L$ is the approximate derivative of order $t$ at $x_0$.

\end{definition}

Thus a $t$-th order approximate derivative at $x_0$, is a $t$-th order difference quotient in which the limit as $h$ tends to $0$ is taken over a set with full area density at $0$. The reason that the set $E'$ has full area density at $0$ instead of at $x_0$ is that the limits in the definitions of usual higher order derivatives are taken as $h$ tends to $0$ and therefore, the higher order approximate derivatives must be defined similarly.

\section{Results from measure theory}
 In this section, we briefly review some results from measure theory to be used in our proofs. From now on $q$ denotes the conjugate exponent to $p$; that is, $ q=  \frac{p}{p-1}$, and $dA$ denotes $2$ dimensional Lebesgue (area) measure. Since a bounded point derivation is a bounded linear functional, there exists a function $k$ in $L^q(X)$ such that the measure $kdA$ represents the bounded point derivation. If the representing measure for a $t$-th order bounded point derivation on $R^p(X)$ is known, then it would be useful to have a method for finding the representing measures for bounded point derivations of lesser orders. The next lemma, which describes such a method, is based on a theorem of Wilken \cite{Wilken}.

\begin{lemma}
\label{Wilken2}

Let $1 \leq p < \infty$. Let $t$ be a positive integer and suppose that there is a $t$-th order bounded point derivation on $R^p(X)$ at $x_0$ with representing measure $k_t dA$. For each $m$ with $0 \leq m \leq t$, let $k_m = \frac{m!}{t!}(z-x_0)^{t-m} k_t$. Then $k_m$ belongs to $L^q(X)$ and $k_m dA$ represents an $m$-th order bounded point derivation on $R^p(X)$ at $x_0$.

\end{lemma}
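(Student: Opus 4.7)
The plan is to verify the two claims of the lemma directly. For membership in $L^q(X)$, I would observe that $X$ is compact, so the polynomial weight $(z-x_0)^{t-m}$ is bounded on $X$ by some constant depending only on the diameter of $X$. Since $k_t \in L^q(X)$, it follows immediately that $k_m = \frac{m!}{t!}(z-x_0)^{t-m} k_t$ is also in $L^q(X)$.

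For the representation claim, I would fix a rational function $f$ with poles off $X$ and apply the $t$-th order bounded point derivation to the auxiliary function $g(z) = (z-x_0)^{t-m} f(z)$. Note that $g$ is again a rational function with poles off $X$, so the representation by $k_t dA$ applies, giving
\[ g^{(t)}(x_0) = \int g(z) \, k_t(z) \, dA(z) = \int (z-x_0)^{t-m} f(z) \, k_t(z) \, dA(z). \]
On the other hand, the Leibniz rule expands $g^{(t)}(x_0)$ as a sum involving $\left.\frac{d^j}{dz^j}(z-x_0)^{t-m}\right|_{z=x_0}$ for $0 \leq j \leq t$. Since $(z-x_0)^{t-m}$ has a zero of exact order $t-m$ at $x_0$, only the term $j = t-m$ survives, and it equals $(t-m)!$. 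This collapses the Leibniz sum to $\binom{t}{t-m}(t-m)!\, f^{(m)}(x_0) = \frac{t!}{m!} f^{(m)}(x_0)$.

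Combining the two expressions for $g^{(t)}(x_0)$ and multiplying by $\frac{m!}{t!}$ yields
\[ f^{(m)}(x_0) = \int f(z) \cdot \frac{m!}{t!}(z-x_0)^{t-m} k_t(z) \, dA(z) = \int f \, k_m \, dA, \]
which is exactly the representation asserted by the lemma. The resulting linear functional $f \mapsto \int f k_m \, dA$ is bounded on $R^p(X)$ by H\"older's inequality, since $k_m \in L^q(X)$, so it genuinely defines an $m$-th order bounded point derivation at $x_0$.

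I do not anticipate a serious obstacle here: the argument is essentially an application of Leibniz's rule combined with the observation that multiplication by $(z-x_0)^{t-m}$ preserves the class of rational functions with poles off $X$. The only point one might want to be careful about is confirming that the Leibniz expansion truly collapses to a single nonzero term, but this follows immediately from the order of vanishing of the polynomial factor at $x_0$.
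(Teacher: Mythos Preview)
Your proposal is correct and follows essentially the same approach as the paper: both apply the representing measure $k_t\,dA$ to the auxiliary rational function $(z-x_0)^{t-m}f(z)$, use the Leibniz rule to collapse the resulting $t$-th derivative to a single term equal to $\tfrac{t!}{m!}f^{(m)}(x_0)$, and finish with H\"older's inequality. The only cosmetic difference is that you make explicit the compactness argument for $k_m\in L^q(X)$, whereas the paper simply asserts it.
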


\begin{proof}

Since $k_t$ belongs to $L^q(X)$, $k_m$  also belongs to $L^q(X)$. To prove that $k_m$ represents an $m$-th order bounded point derivation on $R^p(X)$ at $x_0$, we first suppose that $f$ is a rational function with poles off $X$. Hence $f(z)(z-x_0)^{t-m}$ is a rational function and integrating $f(z)(z-x_0)^{t-m}$ against the measure $k_t dA$ is the same as evaluating the $t$-th derivative of $f(z)(z-x_0)^{t-m}$ at $z = x_0$, which can be done using the general Leibniz rule. The only term that will not vanish is the term which puts exactly $t-m$ derivatives on $(z-x_0)^{t-m}$ and $m$ derivatives on $f(z)$. Hence 

\[ \int f(z) (z-x_0)^{t-m} k_t(z) dA_z = \binom{t}{m} (t-m)! f^{(m)}(x_0) =\dfrac{t!}{m!} f^{(m)}(x_0) \]

and \[ \int f(z) k_m(z) dA_z = f^{(m)}(x_0)\]

\bigskip

\noindent Hence by H\"{o}lder's inequality, $|f^{(m)}(x_0)| \leq ||k_m||_q ||f||_p$. So there is a bounded point derivation of order $m$ at $x_0$ and the measure $k_m dA$ represents the bounded point derivation.

\end{proof}






\bigskip

Lastly, we review the definitions of the Cauchy transform and Newtonian potential of a measure.

\begin{definition}
\label{CauchyTransform}

Let $k \in L^q(X)$.

\begin{enumerate}

\item The \textbf{Cauchy transform} of  the measure $k dA$, which is denoted by $\hat{k}(x)$ is defined by 

\[ \hat{k}(x) = \int \dfrac{k(z)}{z-x}dA_z\]

\item The \textbf{Newtonian potential} of the measure $k dA$, which is denoted by $\tilde{k}(x)$ is defined by

\[ \tilde{k}(x) = \int \dfrac{|k(z)|}{|z-x|}dA_z\]

\end{enumerate}

\end{definition}

 \section{A set with full area density at $x_0$}
 
 In this section a method is given to construct a set with full area density at $x_0$ which also possesses the properties needed for the proofs of Theorems \ref{main} and \ref{higherorder}. Constructing this set can be accomplished by first listing the desired properties and then showing that the set with these desired properties has full area density at $x_0$.

\begin{theorem}
\label{exceptional set}

 Suppose $1 < q < 2$. Let $k \in L^q(X)$, and let $0< \delta_0 < 1 $. Let $E$ be the set of $x$ in $X$ that satisfy the following properties.

\begin{enumerate}

\item $\displaystyle \int_X \dfrac{|(x-x_0)k(z)|^q}{|z-x|^q } dA < \delta_0$

\item $|x-x_0|\tilde{k}(x) < \delta_0$

\end{enumerate}

\bigskip

Then $E$ has full area density at $x_0$.

\end{theorem}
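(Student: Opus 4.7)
The plan is to show separately that each of the two bad sets $V_n^{(i)}:=\{x\in\Delta_n(x_0):\text{condition }(i)\text{ fails}\}$ satisfies $m(V_n^{(i)})/m(\Delta_n(x_0))\to 0$; since $\Delta_n(x_0)\setminus E\subseteq V_n^{(1)}\cup V_n^{(2)}$, full area density of $E$ at $x_0$ will follow.

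For condition (1), Markov's inequality and Fubini give
\[
m(V_n^{(1)}) \leq \frac{1}{\delta_0}\int_X |k(z)|^q\,I(z,n)\,dA_z, \qquad I(z,n):=\int_{\Delta_n(x_0)}\frac{|x-x_0|^q}{|z-x|^q}\,dA_x.
\]
I would split at $|z-x_0|=2/n$: when $|z-x_0|\geq 2/n$, the triangle inequality gives $|z-x|\geq|z-x_0|/2$ on $\Delta_n(x_0)$ and, together with $\int_{\Delta_n(x_0)}|x-x_0|^q\,dA_x = O(n^{-q-2})$, yields $I(z,n)\leq C|z-x_0|^{-q}n^{-q-2}$; when $|z-x_0|<2/n$, bound $|x-x_0|^q\leq n^{-q}$ and use local integrability of $|w|^{-q}$ (valid since $q<2$) to obtain $I(z,n)\leq Cn^{-2}$. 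Combining, for any fixed $R>0$ with $2/n<R$,
\[
\int_X |k|^q\,I(z,n)\,dA_z \;\leq\; \frac{C}{n^2}\int_{|z-x_0|<R}|k|^q\,dA + \frac{C\|k\|_q^q}{R^q\,n^{q+2}}.
\]
Given $\varepsilon>0$, I first choose $R$ small (using absolute continuity of $\int|k|^q\,dA$) so the first term is below $\varepsilon/n^2$, then take $n$ large so the second term is also below $\varepsilon/n^2$. This yields $m(V_n^{(1)})/m(\Delta_n(x_0))\to 0$.

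Condition (2) is handled by the identical argument with $|k|^q$ replaced by $|k|$ (which lies in $L^1(X)$ by H\"older since $X$ is bounded) and the exponent $q$ replaced by $1$ throughout. The relevant near/far bounds become $J(z,n)\leq C|z-x_0|^{-1}n^{-3}$ and $J(z,n)\leq Cn^{-2}$ for $J(z,n):=\int_{\Delta_n(x_0)}|x-x_0|/|z-x|\,dA_x$, and the same "fix $R$ first, then let $n\to\infty$" argument closes it.

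The main obstacle is that a direct Markov estimate on $\int_{\Delta_n(x_0)}|x-x_0|^q G(x)\,dA_x$ gives only $O(n^{-2})$, exactly matching $m(\Delta_n(x_0))$ and so yielding a density-bounded but not density-zero conclusion. The crucial maneuver is the cutoff $R$ chosen independent of $n$: once $R$ is fixed, absolute continuity of $\int|k|^q\,dA$ near $x_0$ supplies an arbitrarily small prefactor on the dominant $n^{-2}$ term, upgrading $O$ to $o$.
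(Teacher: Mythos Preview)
Your argument is correct and follows essentially the same skeleton as the paper: Markov's inequality plus Fubini reduces the problem to showing that the averaged kernel
\[
w_n(z)=\frac{1}{m(\Delta_n)}\int_{\Delta_n}\frac{|x-x_0|^q}{|z-x|^q}\,dA_x
\]
satisfies $\int_X w_n(z)|k(z)|^q\,dA_z\to 0$. The paper proves this by showing directly that $w_n$ is uniformly bounded by $2/(2-q)$ and converges pointwise to $\chi_{\{x_0\}}$, then invokes dominated convergence; your near/far split at $|z-x_0|=2/n$ recovers exactly the same uniform bound, and your movable cutoff $R$ is in effect a by-hand execution of the DCT step. For property~(2) the paper takes a shortcut, using H\"older to dominate $|x-x_0|\tilde k(x)$ by $m(X)^{1/p}$ times the $L^q$ integral appearing in property~(1), so that (2) follows from the density result already proved for (1); your choice to rerun the argument with exponent~$1$ is equally valid and only marginally longer. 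Either route works; the paper's is slightly cleaner in packaging, yours is more self-contained.
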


\bigskip

To prove Theorem \ref{exceptional set}, we will need a few lemmas. The first lemma is an extension of a result of Browder \cite[Lemma 1]{Browder}.

\begin{lemma}
\label{Browder2}

Suppose $1 < q < 2$. Let $\chi_{\{x_0\}}$ be the characteristic function of the point $x_0$ and let $m$ denote $2$ dimensional Lebesgue measure. For $n$ positive, let $\Delta_n = \{x: |x-x_0|< \frac{1}{n}\}$ and let $w_n(z) = \displaystyle \frac{1}{m(\Delta_n)} \int_{\Delta_n} \dfrac{|x-x_0|^q}{|z-x|^q} dm_x$. Then $w_n(z) \leq \dfrac{2}{2-q}$ for all $z$ and all $n$, and $w_n(z)$ converges to $\chi_{\{x_0\}}$ pointwise as $n \to \infty$.

\end{lemma}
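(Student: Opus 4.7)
The plan is to prove the two assertions of Lemma \ref{Browder2} separately: first the uniform bound $w_n(z) \le 2/(2-q)$, and then the pointwise convergence $w_n(z) \to \chi_{\{x_0\}}(z)$.

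For the uniform bound, I would begin by pulling out the factor $|x-x_0|^q$ using the crude but sharp estimate $|x-x_0|^q \le n^{-q}$ valid on $\Delta_n$. This gives
\[
w_n(z) \le \frac{1}{m(\Delta_n)\,n^q}\int_{\Delta_n}\frac{dm_x}{|z-x|^q}.
\]
The remaining task is to bound $\int_{\Delta_n}|z-x|^{-q}\,dm_x$ uniformly in $z$. After the translation $y=x-z$, the integrand becomes the radially decreasing function $|y|^{-q}$ and the domain becomes a disk of radius $1/n$ (no longer centered at the origin). The Hardy--Littlewood rearrangement inequality then shows that this integral is maximized when the disk is centered at $0$, i.e.\ when $z=x_0$, so
\[
\int_{\Delta_n}\frac{dm_x}{|z-x|^q}\le \int_{B(0,1/n)}\frac{dm_y}{|y|^q}=2\pi\int_0^{1/n}\rho^{1-q}\,d\rho=\frac{2\pi}{(2-q)\,n^{2-q}}.
\]
Combining this with $m(\Delta_n)=\pi/n^2$ and the factor $n^{-q}$ collapses to precisely $2/(2-q)$, giving the desired uniform bound.

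For the pointwise convergence, I would split into two cases. If $z=x_0$, then the integrand of $w_n$ is identically $1$ on $\Delta_n$, so $w_n(x_0)=1=\chi_{\{x_0\}}(x_0)$ for every $n$. If $z\neq x_0$, fix $z$ and choose $n$ so large that $1/n<|z-x_0|/2$; then for every $x\in\Delta_n$ the reverse triangle inequality yields $|z-x|\ge |z-x_0|-|x-x_0|\ge |z-x_0|/2$. Therefore
\[
w_n(z)\le \frac{1}{m(\Delta_n)}\int_{\Delta_n}\frac{(1/n)^q}{(|z-x_0|/2)^q}\,dm_x=\frac{2^q}{n^q|z-x_0|^q},
\]
which tends to $0$ as $n\to\infty$ since $|z-x_0|$ is a fixed positive number. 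This gives $w_n(z)\to 0=\chi_{\{x_0\}}(z)$.

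The main obstacle is obtaining the specific constant $2/(2-q)$, since a naive sup of $|z-x|^{-q}$ over $\Delta_n$ is infinite and a bound independent of $z$ needs the rearrangement step (or an equivalent symmetry/translation argument showing the worst case occurs at $z=x_0$). Once that is in hand, the remaining calculations are a polar integral and a two-case triangle-inequality argument, both routine.
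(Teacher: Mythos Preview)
Your proof is correct and follows essentially the same approach as the paper: both arguments pull out the factor $|x-x_0|^q\le n^{-q}$, replace the domain $\Delta_n$ by the ball of the same radius centered at the singularity (what you phrase via rearrangement, the paper phrases as integrating over $B(z,1/n)$ instead of $\Delta_n$), and compute the resulting polar integral to get $2/(2-q)$; the pointwise convergence is handled identically via the reverse triangle inequality for $z\neq x_0$.
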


\begin{proof}

We first show that $w_n(z)$ converges to $\chi_{\{x_0\}}$ pointwise as $n \to \infty$.  If $z = x_0$, then the integrand is identically $1$ and $w_n(z) = 1$ for all $n$. Now suppose that $z \neq x_0$. If $n$ is sufficiently large, then $|z-x_0| > \frac{1}{n}$ and thus $z$ need not be in $\Delta_n$ for large $n$. Since the measure of $\Delta_n$ is $\frac{\pi}{n^2}$, we can rewrite $w_n(z)$ as $ \displaystyle  \frac{n^2}{\pi} \int_{\Delta_n} \dfrac{|x-x_0|^q}{|z-x|^q} dm_x$. In addition since $x$ belongs to $\Delta_n$, $|x-x_0| \leq \frac{1}{n}$. Therefore $ \displaystyle w_n(z) \leq \frac{n^{2-q}}{\pi} \int_{\Delta_n} \dfrac{1}{|z-x|^q} dm_x$. If $n$ is sufficiently large, it follows from the reverse triangle inequality that

\[|z-x| \geq \Bigr||z-x_0| - |x_0 -x|\Bigr| \geq |z-x_0|-\frac{1}{n} > 0\]

\bigskip

\noindent Thus $|z-x|^q > (|z-x_0|-\frac{1}{n})^q>0$ and 

\[w_n(z) \leq \dfrac{n^{2-q}}{\pi(|z-x_0|-\frac{1}{n})^q}\int_{\Delta_n}  dm_x \leq \dfrac{ n^{-q}}{(|z-x_0|-\frac{1}{n})^q}\]

\bigskip

\noindent which tends to $0$ as $n\to \infty$. Thus if $z \neq x_0$ then $w_n(z)$ tends to $0$ pointwise as $n\to \infty$ and hence $w_n(z)$ converges to $\chi_{\{x_0\}}$ pointwise as $n\to \infty$.

\bigskip

To show that $w_n(z) \leq \dfrac{2}{2-q}$ for all $z$ and all $n$, we first recall the inequality \[ \displaystyle w_n(z) \leq \frac{n^{2-q}}{\pi} \int_{\Delta_n} \dfrac{1}{|z-x|^q} dm_x\]

\bigskip

\noindent which was proved above. Now, the value of the integral would be larger if the integration was performed over $B(z,\frac{1}{n})$, the disk with radius $\frac{1}{n}$ centered at $z$ instead of integrating over $\Delta_n$. Hence,

\[ w_n(z) \leq \frac{n^{2-q}}{\pi} \int_{B(z,\frac{1}{n})} \dfrac{1}{|z-x|^q} dm_x \]

\bigskip

It follows from a calculation that $\displaystyle \int_{B(z,\frac{1}{n})} \dfrac{1}{|z-x|^q} dm_x = \dfrac{2 \pi n^{-(2-q)}}{2-q}$. Hence $w_n(z) \leq  \dfrac{2}{2-q}$.

\end{proof}

\bigskip

We note that it is in the above lemma, that our proof breaks down for the case of $p =2$. If $p =2$, then $q=2$, but $w_n(z)$ is no longer bounded in this case since $\frac{1}{z^2}$ is not locally integrable.

\bigskip

\begin{lemma}
\label{Browder}

Suppose $1 < q < 2$. Let $\Delta_n = \left\{x \in X :|x-x_{0}| <\frac{1}{n}\right\}$, let $k \in L^q(X)$ and let $m$ denote $2$ dimensional Lebesgue measure. Then

\[ \frac{1}{m(\Delta_n)} \int_{\Delta_n} \left\{\int_X \dfrac{|x-x_0|^q|k(z)|^q}{|z-x|^q} dm_z \right\} dm_x \to 0 \]

\noindent as $n\to \infty$.

\end{lemma}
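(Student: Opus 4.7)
The plan is to reduce this to Lemma \ref{Browder2} by Fubini's theorem followed by the dominated convergence theorem.

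First I would apply Fubini's theorem (which is justified since the integrand is non-negative) to swap the order of integration. This gives
\[ \frac{1}{m(\Delta_n)} \int_{\Delta_n} \int_X \frac{|x-x_0|^q |k(z)|^q}{|z-x|^q}\, dm_z\, dm_x = \int_X |k(z)|^q \left(\frac{1}{m(\Delta_n)} \int_{\Delta_n} \frac{|x-x_0|^q}{|z-x|^q}\, dm_x\right) dm_z = \int_X |k(z)|^q w_n(z)\, dm_z, \]
where $w_n(z)$ is precisely the function introduced in Lemma \ref{Browder2}.

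Next I would apply the dominated convergence theorem to the sequence $|k(z)|^q w_n(z)$. By Lemma \ref{Browder2}, $w_n(z) \leq \frac{2}{2-q}$ for all $z$ and all $n$, so $|k(z)|^q w_n(z) \leq \frac{2}{2-q} |k(z)|^q$, and the dominating function is integrable because $k \in L^q(X)$. Also by Lemma \ref{Browder2}, $w_n(z) \to \chi_{\{x_0\}}(z)$ pointwise as $n \to \infty$, so $|k(z)|^q w_n(z) \to |k(z)|^q \chi_{\{x_0\}}(z)$ pointwise. Since $\{x_0\}$ has $2$-dimensional Lebesgue measure zero, the pointwise limit is zero almost everywhere, and therefore
\[ \int_X |k(z)|^q w_n(z)\, dm_z \to \int_X |k(z)|^q \chi_{\{x_0\}}(z)\, dm_z = 0. \]

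There is essentially no obstacle here beyond invoking the two facts about $w_n$ already established in Lemma \ref{Browder2}; the real content of the argument lies in that lemma (in particular, the uniform bound, which is where the restriction $q < 2$ is used, as the author noted). The present lemma is simply the averaging version of that convergence, combined with the weight $|k|^q$, packaged via Fubini and dominated convergence.
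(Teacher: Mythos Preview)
Your proof is correct and follows essentially the same approach as the paper: swap the order of integration via Fubini to reduce the double integral to $\int_X |k(z)|^q w_n(z)\,dm_z$, then apply the dominated convergence theorem using the uniform bound and pointwise convergence of $w_n$ from Lemma~\ref{Browder2}. The paper presents these two steps in the opposite order, but the argument is identical in substance.
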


\begin{proof}
 Let $w_n(z)$ be as in the previous lemma. Since $w_n(z)$ is uniformly bounded for all $n$, $ \displaystyle \int_X w_n(z) |k(z)|^q \leq C \int_X  |k(z)|^q $ and because $k(z) \in L^q(X)$, it follows that this integral is bounded. Since $w_n(z)$ tends to $0$ almost everywhere as $n\to \infty$, it follows from the dominated convergence theorem that $\displaystyle \int_{X} w_n(z) |k(z)|^q \to 0$ as $n\to \infty$. Recall that $w_n(z) = \displaystyle \frac{1}{m(\Delta_n)} \int_{\Delta_n} \dfrac{|x-x_0|^q}{|z-x|^q} dm_x$. Hence interchanging the order of integration yields

 \[\frac{1}{m(\Delta_n)} \int_{\Delta_n} \left\{\int_X \dfrac{|x-x_0|^q|k(z)|^q}{|z-x|^q} dm_z \right\} dm_x \to 0\]

\bigskip

\noindent as $n\to \infty$.
\end{proof}

\begin{lemma}
\label{property2}

Suppose $1 < q < 2$. Choose $\delta >0$, let $k \in L^q(X)$ and let $m$ denote 2 dimensional Lebesgue measure. Let \[E_{\delta} = \left\{ x \in X : \int_X \dfrac{|x-x_0|^q|k(z)|^q}{|z-x|^q} dm_z < \delta \right\}\] Then $E_{\delta}$ has full area density at $x_0$.

\end{lemma}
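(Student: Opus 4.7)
The plan is to apply a Chebyshev--Markov inequality to the conclusion of Lemma \ref{Browder}. Set
\[g(x) = \int_X \dfrac{|x-x_0|^q |k(z)|^q}{|z-x|^q}\, dm_z,\]
so that $g \geq 0$ and $E_\delta = \{x \in X : g(x) < \delta\}$. Lemma \ref{Browder} says precisely that the averages of $g$ over the balls $\Delta_n(x_0)$ tend to zero as $n \to \infty$, and this is the only analytic input we will need.

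The main step I would carry out is Markov's inequality on $\Delta_n(x_0)$: since $g \geq 0$,
\[\delta \cdot m\bigl(\{x \in \Delta_n(x_0) : g(x) \geq \delta\}\bigr) \;\leq\; \int_{\Delta_n(x_0)} g(x)\, dm_x.\]
Dividing both sides by $\delta \cdot m(\Delta_n(x_0))$ and invoking Lemma \ref{Browder} yields
\[\dfrac{m\bigl(\{x \in \Delta_n(x_0) : g(x) \geq \delta\}\bigr)}{m(\Delta_n(x_0))} \;\longrightarrow\; 0 \quad \text{as } n \to \infty.\]
Since $\Delta_n(x_0) \setminus E_\delta$ is contained in this super-level set of $g$ (identifying $\Delta_n(x_0)$ with its intersection with $X$, as the setup of a bounded point derivation on $R^p(X)$ at $x_0$ ensures that $x_0$ is a density point of $X$), the definition of full area density at $x_0$ is satisfied.

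I do not expect a serious obstacle here. The analytical heavy lifting---the uniform $L^\infty$ bound and pointwise convergence of the auxiliary functions $w_n$, followed by the dominated convergence argument---has already been carried out in Lemmas \ref{Browder2} and \ref{Browder}. The only small bookkeeping point is reconciling the convention that $\Delta_n$ denotes the full disc in Lemma \ref{Browder2} with $\Delta_n = \{x \in X : |x-x_0|<1/n\}$ in Lemma \ref{Browder}; this causes no trouble because $g$ is defined for every $x$ in the plane and enlarging the region of integration to the full disc only enlarges the right-hand side of the Markov inequality, so the bound remains valid.
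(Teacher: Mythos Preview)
Your proposal is correct and is essentially the same argument as the paper's: the paper bounds $\dfrac{1}{m(\Delta_n)}\int_{\Delta_n\setminus E_\delta} g(x)\,dm_x \ge \dfrac{\delta\, m(\Delta_n\setminus E_\delta)}{m(\Delta_n)}$ and then invokes Lemma~\ref{Browder}, which is exactly the Markov--Chebyshev step you spell out. Your side remark about the two conventions for $\Delta_n$ is a fair observation, but it does not alter the argument.
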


\begin{proof}

It follows immediately from the definition of $E_{\delta}$ that

\[\frac{1}{m(\Delta_n)}\int_{\Delta_n \setminus E_{\delta}} \left\{ \int_X \dfrac{|x-x_0|^q|k(z)|^q}{|z-x|^q} dm_z \right\} dm_x\geq \dfrac{\delta m(\Delta_n \setminus E_{\delta})}{m(\Delta_n)}\]

\bigskip

By Lemma \ref{Browder} the left hand side tends to $0$ as $n$ goes to infinity. Thus $\displaystyle \lim_{n \to \infty} \dfrac{ m(\Delta_n \setminus E_{\delta})}{m(\Delta_n)} = 0$ and $E_{\delta}$ has full area density at $x_0$.

\end{proof}

\bigskip

The proof of Theorem \ref{exceptional set} now follows from Lemma \ref{property2}.

\begin{proof}\textbf{(Theorem \ref{exceptional set})}

Lemma \ref{property2} immediately implies that the set of $x$ in $X$ where property $1$ holds has full area density at $x_0$. To show that the set where property $2$ holds also has full area density at $x_0$ note that by H\"{o}lder's inequality

\[ \int_X \dfrac{|x-x_0||k(z)|}{|z-x|} dm_z \leq \left\{\int_X \dfrac{|x-x_0|^q|k(z)|^q}{|z-x|^q} dm_z \right\}^{\frac{1}{q}} \cdot m(X)^{\frac{1}{p}}  \]

\bigskip

It follows from Lemma \ref{property2} that the integral on the right is bounded. If $m(X) = 0$, then property $2$ holds for any choice of $\delta_0 > 0$ and we are done. Thus we can assume that $m(X) \neq 0$. If the integral on the right hand side is less that $\dfrac{\delta_0}{m(X)^{\frac{1}{p}}}$ then the left hand side will be less than $\delta_0$. This can be done by choosing $\delta = \dfrac{\delta_0}{m(X)^{\frac{1}{p}}} $ in Lemma \ref{property2}. Thus property $2$ also holds on a set with full area density at $x_0$ and thus the set $E$ has full area density at $x_0$. 
\end{proof}

\bigskip

\section{The proof of Theorem \ref{main}}

The goal of this section is to prove Theorem \ref{main} by showing that, for $2 < p < \infty$, the existence of a bounded point derivation on $R^p(X)$ at $x_0$ implies that every function in $R^p(X)$ has an approximate derivative at $x_0$. Choose $f$ in $R^p(X)$ and let $g(z) = f(z) - D_{x_0}^0 f - D_{x_0}^1 f\cdot (x-x_0)$. Then to show that $f(z)$ has an approximate derivative at $x_0$, it suffices to show that $g(z)$ has an approximate derivative at $x_0$ since $g(z)$ differs from $f(z)$ by a polynomial. The reason that it is more advantageous to work with $g(z)$ rather than $f(z)$ is that $D_{x_0}^0 (g) = D_{x_0}^1(g) = 0$.

\bigskip

Consider the following family of linear functionals defined for every $x \in X$: $L_x(F) = \dfrac{F(x)}{x-x_0}-D_{x_0}^1F$. To prove Theorem \ref{main}, it suffices to show that there is a set $E$ with full area density at $x_0$ such that $L_x(g)$ tends to $0$ as $x$ tends to $0$ through the points of $E$. Once this is shown, it follows that $\displaystyle \lim_{x \to x_0}  \dfrac{g(x)}{x-x_0}-D_{x_0}^1g = 0$ and since $g(x_0) = 0$, this shows that $g$ has an approximate derivative at $x_0$.

\bigskip

 Since $R^p(X)$ has a bounded point derivation at $x_0$, there exists a function $k_1$ in $L^q(X)$ such that the measure $k_1dA$ represents the bounded point derivation. Hence by Lemma \ref{Wilken2}, the function $k = (z-x_0)k_1$ belongs to $L^q(X)$ and $kdA$ is a representing measure for $x_0$. Fix $0 < \delta_0 < 1$ and let $E$ be the set of $x$ in $X$ that satisfies the following properties.

\begin{enumerate}

\item $\displaystyle \int_X \dfrac{|(x-x_0)k_1|^q}{|z-x|^q } dA < \delta_0$

\item $\displaystyle \int_X \dfrac{|(x-x_0)k|^q}{|z-x|^q } dA < \delta_0$

\item $|x-x_0|\tilde{k}(x) < \delta_0$

\end{enumerate}

\bigskip

\noindent It follows from Theorem \ref{exceptional set} that $E$ has full area density at $x_0$.

\bigskip

To show that $L_x(g)$ tends to $0$ through $E$ it is useful to consider how $g(z)$ can be approximated by rational functions with poles off $X$. Since $f$ is in $R^p(X)$, there is a sequence $\{f_j\}$ of rational functions with poles off $X$ which converges to $f(z)$ in the $L^p$ norm. Let $g_j(z) = f_j(z) - D_{x_0}^0 f_j -  D_{x_0}^1 f_j \cdot (x-x_0)$. Then $\{g_j\}$ is a sequence of rational functions with poles off $X$ that possesses the following properties:

\begin{enumerate}

\item $\{g_j\}$ converges to $g(z)$ in the $L^p$ norm.

\item For each $j$, $D_{x_0}^0 g_j= D_{x_0}^1g_j = 0$.

\item $L_x(g_j)$ converges to $0$ as $x$ tends to $x_0$.

\end{enumerate}

\noindent The first two properties are easy to verify. The third property follows since $g_j(z)$ is a rational function with poles off $X$ and thus $D_{x_0}^1g_j = g_j'(x_0)$.

\bigskip

It now follows from the linearity of $L_x$ and the triangle inequality that $|L_x(g)| \leq |L_x(g-g_j)| + |L_x(g_j)|$. Hence to show that $L_x(g)$ tends to $0$ as $x$ tends to $x_0$, it follows from property $3$ that it is enough to show that $L_x(g-g_j) \to 0$ as $j\to \infty$. By property $1$ it suffices to prove that there is a constant $C$ which does not depend on $x$ such that for all $x$ in $E$, $|L_x(g-g_j)| \leq C ||g-g_j||_p$. Moreover, since a bounded point derivation is already a bounded linear functional, it is enough to show that there is a constant $C$ which does not depend on $j$ such that $\left|\dfrac{g(x)-g_j(x)}{x-x_0}\right| \leq C ||g-g_j||_p $. This is done in Lemma \ref{Browder1}

\bigskip

We will first need to construct a representing measure for $x$ in $E$, which allows  $\dfrac{g(x)-g_j(x)}{x-x_0}$ to be expressed by an integral, from which the desired bound can be obtained. To do this, we borrow a technique of Bishop \cite{Bishop}. Bishop showed that if $\mu$ is an annihilating measure on $R(X)$ (i.e $\int f d\mu = 0$ for all $f$ in $R(X)$) and if the Cauchy transform $\hat{\mu}(x)$ is defined and nonzero, then the measure defined by $\dfrac{1}{\hat{\mu}(x)} \dfrac{\mu(z)}{z-x}$ is a representing measure for $x$. If $kdA$ is a representing measure for $x_0$ on $R^p(X)$ then $(z-x_0)kdA$ is an annihilating measure on $R^p(X)$ and thus Bishop's technique can be used to construct a representing measure for $x$ on $R^p(X)$.

\begin{lemma}
\label{Bishop1}

Let $k$ be a function in $L^q(X)$ such that $kdA$ is a representing measure for $x_0$. Choose $x$ in $X$ and suppose that $|x-x_0| \tilde{k} < \delta < 1$, and that $\dfrac{(x-x_0)k}{z-x}$ belongs to $L^q(X)$. Let $c = \reallywidehat{(z-x_0)k}(x)$ and let $k_x(z)  = \dfrac{1}{c}\dfrac{(z-x_0)k(z) }{z-x}$. Then there exists a bounded point evaluation on $R^p(X)$ at $x$ and $k_x dA$ is a representing measure for $x$.

\end{lemma}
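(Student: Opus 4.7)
The plan is to execute Bishop's technique in the $L^p$ setting: I will show that (i) the constant $c$ is nonzero, (ii) $k_x \in L^q(X)$, and (iii) the functional $F \mapsto \int F \, k_x\, dA$ agrees with evaluation at $x$ on every rational function with poles off $X$, so that H\"older's inequality then delivers the bounded point evaluation.

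First, I would compute $c$ by decomposing $\frac{z-x_0}{z-x} = 1 + \frac{x-x_0}{z-x}$. Since $kdA$ represents $x_0$, applying the representing identity to the constant function $1$ gives $\int k \, dA = 1$, so
\[ c = \reallywidehat{(z-x_0)k}(x) = 1 + (x-x_0)\hat{k}(x). \]
The hypothesis $|x-x_0|\tilde{k}(x) < \delta < 1$ dominates $|(x-x_0)\hat{k}(x)|$, so $|c| \geq 1 - \delta > 0$ and $k_x$ is well defined. For (ii), the same decomposition gives $\frac{(z-x_0)k(z)}{z-x} = k(z) + \frac{(x-x_0)k(z)}{z-x}$, and both summands lie in $L^q(X)$ by the assumption that $k \in L^q(X)$ and that $\frac{(x-x_0)k}{z-x} \in L^q(X)$; multiplying by the scalar $1/c$ keeps us in $L^q$.

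The heart of the argument is step (iii), and this is the step I expect to require the most care. Given a rational function $f$ with poles off $X$, set $h(z) = (z-x_0)f(z)$, which is again a rational function with poles off $X$ and satisfies $h(x_0) = 0$. The quotient $\frac{h(z)-h(x)}{z-x}$ is a rational function whose only candidate new pole (at $z=x$) is cancelled by the numerator's vanishing there, so its poles remain off $X$. Applying the representing identity $\int \cdot \, k\, dA = \,\cdot\,(x_0)$ to this quotient yields
\[ \int \frac{h(z)-h(x)}{z-x} k(z)\, dA_z = \frac{h(x_0)-h(x)}{x_0-x} = \frac{h(x)}{x-x_0} = f(x). \]
Expanding the left side and using $\int \frac{k(z)}{z-x}dA_z = \hat{k}(x)$ gives
\[ \int f(z)\, \frac{(z-x_0)k(z)}{z-x}\, dA_z \;=\; f(x)\bigl(1+(x-x_0)\hat{k}(x)\bigr) \;=\; c\, f(x), \]
so $\int f \, k_x\, dA = f(x)$ as required.

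Finally, H\"older's inequality gives $|f(x)| \leq \|k_x\|_q \|f\|_p$ for every rational function $f$ with poles off $X$, which is exactly a bounded point evaluation at $x$, and the identity above shows $k_x\, dA$ is a representing measure. The main obstacle I anticipate is bookkeeping in step (iii): choosing the right auxiliary rational function ($h$ above) so that Bishop's ``annihilator divided by $(z-x)$'' trick transfers cleanly from the uniform-norm original to the $L^p$ setting, and checking that all pole locations stay off $X$ after the manipulations.
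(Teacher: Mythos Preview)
Your proposal is correct and follows essentially the same approach as the paper: both compute $c = 1 + (x-x_0)\hat{k}(x)$ and bound it away from zero using the hypothesis $|x-x_0|\tilde{k}(x) < \delta < 1$, both obtain $k_x \in L^q(X)$ from the decomposition $\tfrac{(z-x_0)k}{z-x} = k + \tfrac{(x-x_0)k}{z-x}$, and both finish with H\"older. The only cosmetic difference is in the auxiliary rational function used in step~(iii): the paper applies the representing measure to $\tfrac{[F(z)-F(x)](z-x_0)}{z-x}$ (which vanishes at $x_0$), whereas you apply it to $\tfrac{h(z)-h(x)}{z-x}$ with $h(z)=(z-x_0)f(z)$ (whose value at $x_0$ is $f(x)$); these two functions differ by the constant $f(x)$, so the computations are identical up to a harmless shift.
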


\bigskip

\begin{proof} 

Before we begin the proof, we note a few things. First

\[\displaystyle c = \reallywidehat{(z-x_0)k}(x) = \int \dfrac{(z-x_0)k}{z-x} dA_z = 1 + \int \dfrac{(x-x_0)k}{z-x} dA_z = 1+(x-x_0)\hat{k}(x)\]

\bigskip
\noindent Thus $1-|x-x_0|\tilde{k}(x) \leq |c| \leq 1+ |x-x_0|\tilde{k}(x)$ and hence, $1-\delta \leq |c|\leq 1+\delta$. Since $\delta < 1$, $k_x$ is well defined. Second, $k_x$ can also be written as follows:

\begin{equation*}
\label{k_x}
k_x(z) = \dfrac{(z-x_0)k(z)}{(z-x)(1+(x-x_0) \hat{k}(x))}. 
\end{equation*}

\bigskip

\noindent Finally, $\dfrac{(z-x_0)k(z)}{z-x} = 1 + \dfrac{(x-x_0)k(z)}{z-x}$ and hence $k_x$ belongs to $L^q(X)$.

\bigskip

If $F$ is a rational function with poles off $X$, $\dfrac{[F(z)-F(x)](z-x_0)}{z-x}$ is also a rational function with poles off $X$. Since $k dA$ is a representing measure for $x_0$,  $\displaystyle \int \dfrac{[F(z)-F(x)](z-x_0)}{z-x} k(z) dA_z = 0$ and hence \[
\displaystyle \int \dfrac{F(z)(z-x_0)}{z-x} k(z) dA_z - \int \dfrac{F(x)(z-x_0)}{z-x} k(z) dA_z = 0.\]

\bigskip
 \noindent Since $z-x_0 = z-x + x-x_0$, it follows that

\[ \int \dfrac{F(x)(z-x_0)}{z-x} k(z) dA_z = \int F(x)k(z) dA_z + \int \dfrac{F(x)(x-x_0)k(z)}{z-x}dA_z = F(x)(1+(x-x_0)\hat{k}(x)).\]

\bigskip

\noindent Hence $ \displaystyle F(x) = \int \dfrac{F(z)(z-x_0)k(z)}{(z-x)(1+(x-x_0) \hat{k}(x))}dA_z$. So  $F(x) = \int F(z) k_x(z) dA$ whenever $F$ is a rational function with poles off $X$. Thus by H\"{o}lder's inequality $|F(x)| \leq ||k_x||_q ||F||_p$ and since $k_x$ is an $L^q$ function, it follows that $x$ admits a bounded point evaluation on $R^p(X)$ and that $k_x dA$ is a representing measure for $x$.

\end{proof}

\bigskip

\begin{lemma}
\label{Browder1}

Suppose that $x$ belongs to $E$ and let $j$ be a positive integer. Then there exists a constant $C$ which does not depend on $x$ or $j$ such that $\dfrac{|g(x)-g_j(x)|}{|x-x_0|}  \leq C  ||g-g_j||_p$.

\end{lemma}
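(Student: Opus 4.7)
The plan is to use the representing measure $k_x\,dA$ for evaluation at $x$ produced by Lemma \ref{Bishop1}, and then carry out an algebraic reduction that exploits the vanishing $D^0_{x_0}(g-g_j)=D^1_{x_0}(g-g_j)=0$. First I would use property $3$ of $E$ to note that $|c|=|1+(x-x_0)\hat{k}(x)|\geq 1-\delta_0>0$, so the hypotheses of Lemma \ref{Bishop1} hold at $x$ and the representing measure $k_x(z)=\frac{(z-x_0)k(z)}{c(z-x)}$ is a well-defined element of $L^q(X)$. Since $g_j$ is rational and the bounded point evaluation at $x$ extends the pointwise evaluation to $R^p(X)$, I may write
\[
g(x)-g_j(x) \;=\; \int (g-g_j)(z)\,k_x(z)\,dA_z.
\]

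Next, substituting $k=(z-x_0)k_1$ gives $k_x(z)=\frac{(z-x_0)^2k_1(z)}{c(z-x)}$, and I would expand
\[
\frac{(z-x_0)^2}{z-x} \;=\; (z-x)\;+\;2(x-x_0)\;+\;\frac{(x-x_0)^2}{z-x},
\]
which splits the integral into three pieces. The first piece equals
\[
\int (g-g_j)\,k_1(z-x)\,dA \;=\; \int (g-g_j)\,k\,dA\;-\;(x-x_0)\int (g-g_j)\,k_1\,dA;
\]
since $k\,dA$ and $k_1\,dA$ represent $D^0_{x_0}$ and $D^1_{x_0}$ on $R^p(X)$, and both $g$ and $g_j$ are annihilated by these functionals by construction, this vanishes. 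The middle piece vanishes for the same reason. What survives is the clean identity
\[
g(x)-g_j(x) \;=\; \frac{(x-x_0)^2}{c}\int \frac{(g-g_j)(z)\,k_1(z)}{z-x}\,dA_z.
\]

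Finally I would divide by $x-x_0$ and apply H\"older's inequality to obtain
\[
\frac{|g(x)-g_j(x)|}{|x-x_0|} \;\leq\; \frac{1}{|c|}\,\|g-g_j\|_p\,\Bigl(\int \frac{|(x-x_0)k_1(z)|^q}{|z-x|^q}\,dA_z\Bigr)^{1/q}.
\]
Property $1$ of $E$ (used with $k_1$ in place of $k$) bounds the last factor by $\delta_0^{1/q}$, while property $3$ bounds $1/|c|$ by $1/(1-\delta_0)$, giving the constant $C=\delta_0^{1/q}/(1-\delta_0)$, independent of both $x$ and $j$.

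The main obstacle I anticipate is the bookkeeping that justifies the vanishing identities for $g\in R^p(X)$ rather than for a rational function: one must consistently interpret $\int g\,k\,dA$ and $\int g\,k_1\,dA$ as the bounded linear extensions of $D^0_{x_0}$ and $D^1_{x_0}$ to all of $R^p(X)$, which is legitimate because these integrals are continuous in the $L^p$ norm on $g$ (as $k,k_1\in L^q(X)$) and agree with the functionals on the dense subspace of rational functions with poles off $X$. Once this identification is in place, both lower-order derivations vanish on $g$ by the definition $g = f - D^0_{x_0}f - D^1_{x_0}f\cdot(z-x_0)$ and on $g_j$ by construction, so the reduction to the single remaining integral goes through cleanly.
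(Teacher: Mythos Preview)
Your proof is correct and follows essentially the same approach as the paper: both use the representing measure $k_x$ from Lemma~\ref{Bishop1}, exploit the vanishing of $D^0_{x_0}(g-g_j)$ and $D^1_{x_0}(g-g_j)$ to reduce to the single integral $\frac{(x-x_0)^2}{c}\int\frac{(g-g_j)k_1}{z-x}\,dA$, and finish with H\"older's inequality and properties~1 and~3 of $E$. The only cosmetic difference is that the paper reaches the same reduction by twice applying the linear identity $\frac{1}{z-x}=\frac{1}{z-x_0}+\frac{x-x_0}{(z-x)(z-x_0)}$, whereas you expand $\frac{(z-x_0)^2}{z-x}$ in one step.
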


\begin{proof}

\bigskip

 If $x$ belongs to $E$, then the hypotheses of Lemma \ref{Bishop1} are satisfied and $k_xdA$ is a representing measure for $x$. Thus




\[ |g(x)-g_j(x)| = \dfrac{1}{|c|}\left| \int [g(z)-g_j(z)] \left(\dfrac{z-x_0}{z-x} \right)k(z) dA_z\right| \]

\bigskip

\noindent Since $D^0_{x_0} [g(z)-g_j(z)] = 0$, it follows that $\int [g(z)-g_j(z)] k(z) dA_z = 0$. Then since $\dfrac{z-x_0}{z-x} = 1 + \dfrac{x-x_0}{z-x} $ we obtain that

\begin{dmath}
\label{eq1.5}
|g(x)-g_j(x)| =  \dfrac{|x-x_0|}{|c|}\left| \int [g(z)-g_j(z)] \dfrac{k(z)}{z-x} dA_z\right|
\end{dmath}

\bigskip

\noindent Next, observe that $\dfrac{1}{z-x}= \dfrac{1}{z-x_0} + \dfrac{x-x_0}{(z-x)(z-x_0)}$. Applying this observation to  \eqref{eq1.5} yields

\begin{dmath}
\label{eq2}
|g(x)-g_j(x)|  =  \dfrac{|x-x_0|}{|c|} \left| \int [g(z)-g_j(z)]\dfrac{k(z)}{z-x_0 }  dA_z+ \int [g(z)-g_j(z)] \dfrac{(x-x_0) k(z)}{(z-x)(z-x_0)}  dA_z \right|
\end{dmath}

\bigskip

 \noindent The first integral in \eqref{eq2} is the same as the bounded point derivation at $x_0$ applied to $g(z)-g_j(z)$ which is $0$,  and hence $ \displaystyle |g(x)-g_j(x)| = \dfrac{|x-x_0|}{|c|} \left|\int [g(z)-g_j(z)] \dfrac{(x-x_0)k_1(z)}{(z-x)}  dA_z \right|$. Finally by H\"{o}lder's inequality, 

\[\dfrac{|x-x_0|}{|c|} \left|\int [g(z)-g_j(z)] \dfrac{(x-x_0)k_1(z)}{(z-x)}  dA_z \right| \leq \dfrac{|x-x_0|}{|c|} \left \|g-g_j \right \|_p \left \|\dfrac{(x-x_0)k_1}{(z-x)}\right \|_q\]

\bigskip

\noindent and since it follows from property $1$ of $E$ that $\left \|\dfrac{(x-x_0)k_1}{(z-x)}\right \|_q \leq \delta_0$, there is a constant $C$ that does not depend on $x$ or $j$ such that 

\[
|g(x) - g_j(x)| \leq C |x-x_0| \cdot||g-g_j||_p. 
\]

\end{proof}

\section{Higher order bounded point derivations}

The goal of this section is to prove Theorem \ref{higherorder} by modifying the proof of Theorem \ref{main}. Choose $f$ in $R^p(X)$ and let $g(z) = f(z) - D_{x_0}^0 f - D_{x_0}^1 f \cdot (z-x_0) - ... - \dfrac{1}{t!} D_{x_0}^t f \cdot (z-x_0)^t$. As before, to show that $f(z)$ has a $t$-th order approximate derivative at $x_0$ it suffices to show that $g(z)$ has a $t$-th order approximate derivative at $x_0$. Also note that  $D_{x_0}^mg = 0$ for $0 \leq m \leq t$. 

\bigskip

 Consider the following family of  linear functionals defined for every $h$ in $\mathbb{C}$: $L_h(F) = \Delta_h^tF(x_0) - D_{x_0}^tF$. To prove Theorem \ref{higherorder}, it suffices to show that there is a set $E'$ with full area density at $0$ such that $L_h(g)$ tends to $0$ as $h$ tends to $0$ through the points of $E'$. Once this is shown, it follows that  $\displaystyle \lim_{h \to 0, h \in E'} \left|\Delta_h^t g(x_0) - D_{x_0}^t g\right| = 0$ and thus $g$ has a $t$-th order approximate derivative at $x_0$. 


\bigskip

Since there is a $t$-th order bounded point derivation on $R^p(X)$ at $x_0$, there exists a function $k_t$ in $L^q(X)$ such that the measure $k_tdA$ represents this $t$-th order bounded point derivation. Hence by Lemma \ref{Wilken2}, the function $k = \dfrac{(z-x_0)k_t}{t!}$ belongs to $L^q(X)$ and $kdA$ is a representing measure for $x_0$. Fix $0 < \delta_0 < 1$ and let $E$ be the set of $x$ in $X$ that satisfies the following properties.

\begin{enumerate}

\item $\displaystyle \int_X \dfrac{|(x-x_0)k_t|^q}{|z-x|^q } dA < \delta_0$

\item $\displaystyle \int_X \dfrac{|(x-x_0)k|^q}{|z-x|^q } dA < \delta_0$

\item $|x-x_0|\tilde{k}(x) < \delta_0$

\end{enumerate}

\bigskip

It follows from Theorem \ref{exceptional set} that $E$ has full area density at $x_0$. Now, for $1 \leq s \leq t$, let $E_s = \{h \in \mathbb{C}: x_0+sh \in E \}$ and let $E' = \displaystyle \bigcap_{s=1}^t E_s $. Then for each $s$, $E_s$ has full area density at $0$ and hence $E'$ also has full area density at $0$.

\bigskip

As in the previous section, to show that $L_h(g)$ tends to $0$ through $E'$ it is useful to consider how $g(z)$ can be approximated by rational functions with poles off $X$. Since $f$ belongs to $R^p(X)$, there is a sequence $\{f_j\}$ of rational functions with poles off $X$ which converges to $f(z)$ in the $L^p$ norm. Let $g_j(z) = f_j(z) - D_{x_0}^0 f_j -  D_{x_0}^1 f_j \cdot (x-x_0) - ... - \dfrac{1}{t!} D_{x_0}^t f_j \cdot (x-x_0)^t$. Then $\{g_j\}$ is a sequence of rational functions with poles off $X$ that possesses the following properties.

\begin{enumerate}

\item $\{g_j\}$ converges to $g(z)$ in the $L^p$ norm.

\item For each $j$, $D_{x_0}^m g_j = 0$ for $0 \leq m \leq t$.

\item $L_h(g_j)$ converges to $0$ as $h$ tends to $0$.

\end{enumerate}

\bigskip

\noindent The first two properties are easy to verify. The third property follows since $g_j(z)$ is a rational function with poles off $X$ and thus $D_{x_0}^tg_j = g_j^{(t)}(x_0)$. 

\bigskip

It now follows from the linearity of $L_h$ and the triangle inequality that $|L_h(g)| \leq |L_h(g-g_j)| + |L_h(g_j)|$. Hence to show that $L_h(g)$ tends to $0$ as $h$ tends to $0$, it follows from property $3$ that it is enough to show that $L_h(g-g_j) \to 0$ as $j \to \infty$. By property $1$ it suffices to prove that there is a constant $C$ which does not depend on $h$ such that for all $h$ in $E'$, $|L_h(g-g_j)| \leq C ||g-g_j||_p$. Moreover, since a bounded point derivation is already a bounded linear functional, it is enough to show that there is a constant $C$ which does not depend on $j$ such that $|\Delta_h^t (g(x_0)-g_j(x_0))| \leq C ||g-g_j||_p $. Furthermore, since the difference quotient is a finite linear combination of terms of the form $g(x_0 +sh) -g_j(x_0 +sh)$, it is enough to show that for each $s$ between $0$ and $t$, $|g(x_0 +sh) -g_j(x_0 +sh)| \leq C ||g-g_j||_p$. This is done in Lemma \ref{Browder3}; however, we will also need the following factorization lemma.

\begin{lemma}
\label{factorization}

Let $t$ be a positive integer. Then

\[\dfrac{1}{z-x} =\sum_{m = 1}^{t} \dfrac{(x-x_0)^{m-1}}{(z-x_0)^m}+ \dfrac{(x-x_0)^t}{(z-x)(z-x_0)^t}\]

\end{lemma}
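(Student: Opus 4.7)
The plan is to prove this as a straightforward algebraic identity, most cleanly via induction on $t$, with the base case serving as the fundamental telescoping step.

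For the base case $t=1$, I would simply clear denominators: the right hand side becomes
\[
\frac{1}{z-x_0}+\frac{x-x_0}{(z-x)(z-x_0)}=\frac{(z-x)+(x-x_0)}{(z-x)(z-x_0)}=\frac{z-x_0}{(z-x)(z-x_0)}=\frac{1}{z-x},
\]
which verifies the identity directly.

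For the inductive step, assuming the identity holds for some $t\geq 1$, I would apply the base case identity to the error term $\frac{1}{z-x}$ appearing as a factor inside the last summand on the right. Specifically, I would rewrite
\[
\frac{(x-x_0)^t}{(z-x)(z-x_0)^t}=\frac{(x-x_0)^t}{(z-x_0)^t}\cdot\frac{1}{z-x}=\frac{(x-x_0)^t}{(z-x_0)^t}\left[\frac{1}{z-x_0}+\frac{x-x_0}{(z-x)(z-x_0)}\right],
\]
which expands to $\frac{(x-x_0)^t}{(z-x_0)^{t+1}}+\frac{(x-x_0)^{t+1}}{(z-x)(z-x_0)^{t+1}}$. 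Substituting this back splits off one new term of the desired sum (the $m=t+1$ term) and produces the correct tail for $t+1$, completing the induction.

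Alternatively, one could avoid induction entirely by recognizing the identity as a truncated geometric series: writing $\frac{1}{z-x}=\frac{1}{z-x_0}\cdot\frac{1}{1-r}$ with $r=\frac{x-x_0}{z-x_0}$, and then applying the finite geometric sum $\frac{1}{1-r}=\sum_{m=0}^{t-1}r^m+\frac{r^t}{1-r}$. Reindexing $m\mapsto m-1$ yields exactly the stated formula. There is no real obstacle here—this is a routine rational-function identity—so the only thing to be careful about is the indexing of the sum and keeping the boundary cases straight; I would favor the induction argument for expository clarity since it mirrors how the identity will later be used (iteratively peeling off powers of $(x-x_0)/(z-x_0)$ against the representing measure).
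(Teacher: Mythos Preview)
Your proof is correct and essentially identical to the paper's: both proceed by induction on $t$, verifying the base case $\frac{1}{z-x}=\frac{1}{z-x_0}+\frac{x-x_0}{(z-x)(z-x_0)}$ and then applying this base identity to the $\frac{1}{z-x}$ factor in the remainder term to advance the induction. Your alternative geometric-series remark is a nice observation but not used in the paper.
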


\begin{proof}

The proof is by induction. For the base case, note that 

\begin{equation}
\label{factoreq}
\dfrac{1}{z-x} = \dfrac{1}{z-x_0}+ \dfrac{x-x_0}{(z-x)(z-x_0)}
\end{equation}
\bigskip

 \noindent Now assume that we have shown that

 \[ \dfrac{1}{z-x} =\sum_{m = 1}^{t-1} \dfrac{(x-x_0)^{m-1}}{(z-x_0)^m}+ \dfrac{(x-x_0)^{t-1}}{(z-x)(z-x_0)^{t-1}}\]

\bigskip

\noindent Then

 \[ \dfrac{1}{z-x}= \sum_{m = 1}^{t-1} \dfrac{(x-x_0)^{m-1}}{(z-x_0)^m}+ \dfrac{1}{z-x} \cdot\dfrac{(x-x_0)^{t-1}}{(z-x_0)^{t-1}}\]

\bigskip

\noindent and applying \eqref{factoreq} to the $\dfrac{1}{z-x}$ term in the sum proves the lemma. 

\end{proof}

\begin{lemma}
\label{Browder3}

 Suppose that $h$ belongs to $E'$ and let $j$ be a positive integer. Let $0 \leq s \leq t$. Then there exists a constant $C$ which does not depend on $h$ or $j$ such that $\dfrac{|g(x_0+sh)-g_j(x_0+sh)|}{|h|^t}  \leq C  ||g-g_J||_p$.

\end{lemma}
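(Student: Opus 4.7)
The plan is to mirror the one-step argument of Lemma \ref{Browder1}, only now exploiting all $t+1$ vanishing conditions $\int (g - g_j)\, k_m\, dA = 0$ ($0 \le m \le t$) simultaneously so as to pull out a factor of $(x-x_0)^{t+1}$. The case $s = 0$ is immediate: by construction $g(x_0) = g_j(x_0) = 0$. For $1 \le s \le t$ set $x = x_0 + sh$; since $h \in E'$, $x \in E$, and Lemma \ref{Bishop1} applied to the representing measure $k\, dA$ for $x_0$ (the $m = 0$ case of Lemma \ref{Wilken2}, namely $k = \tfrac{(z-x_0)^{t}k_t}{t!}$) produces the representing measure $k_x\, dA = \tfrac{(z-x_0)k(z)}{c(z-x)}\, dA$ for $x$, with $c = 1 + (x-x_0)\hat{k}(x)$ and $|c| \ge 1 - \delta_0 > 0$ by property $3$ of $E$. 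Substituting gives
\[
g(x) - g_j(x) = \int (g - g_j)\, k_x\, dA = \frac{1}{c\,t!}\int (g - g_j)\, \frac{(z-x_0)^{t+1}\,k_t(z)}{z-x}\, dA_z.
\]

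The central algebraic step is to apply Lemma \ref{factorization} with parameter $t+1$ and multiply through by $(z-x_0)^{t+1}$, which yields
\[
\frac{(z-x_0)^{t+1}}{z-x} = \sum_{m=1}^{t+1}(x-x_0)^{m-1}(z-x_0)^{t+1-m} + \frac{(x-x_0)^{t+1}}{z-x}.
\]
Multiplying by $k_t(z)$ and integrating against $g - g_j$, each summand contains a factor $\int (g - g_j)(z-x_0)^{t+1-m}k_t\, dA$. But $(z-x_0)^{t+1-m}k_t$ is a constant multiple of $k_{m-1}$ by Lemma \ref{Wilken2}, and $\int (g - g_j)\, k_{m-1}\, dA = D^{m-1}_{x_0}(g - g_j) = 0$ for every $m \in \{1,\dots,t+1\}$; so each summand vanishes. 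Only the remainder survives, and we obtain
\[
g(x) - g_j(x) = \frac{(x-x_0)^{t+1}}{c\, t!}\int (g - g_j)\, \frac{k_t(z)}{z-x}\, dA_z.
\]

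From here, H\"older's inequality gives
\[
|g(x) - g_j(x)| \le \frac{|x-x_0|^{t+1}}{|c|\,t!}\, \|g - g_j\|_p\, \left\|\frac{k_t}{z-x}\right\|_q.
\]
Property $1$ of $E$ yields $\bigl\|\tfrac{(x-x_0)k_t}{z-x}\bigr\|_q < \delta_0^{1/q}$, hence $\bigl\|\tfrac{k_t}{z-x}\bigr\|_q < \delta_0^{1/q}/|x-x_0|$, absorbing one factor of $|x-x_0|$. Combined with $|x-x_0| = s|h| \le t|h|$, this produces the required bound $|g(x_0 + sh) - g_j(x_0 + sh)| \le C|h|^t\|g - g_j\|_p$ with $C$ depending only on $\delta_0$, $t$, and $q$, independent of $h$ and $j$. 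The subtle point is aligning the algebraic expansion with the available vanishing moments: applying Lemma \ref{factorization} with parameter $t+1$ (and then multiplying by $(z-x_0)^{t+1}$) is precisely what makes each index $m \in \{1, \ldots, t+1\}$ of the sum correspond to one of $k_0, k_1, \dots, k_t$, so that every polynomial term is annihilated by the vanishing derivations of $g - g_j$.
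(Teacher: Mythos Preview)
Your proof is correct and follows essentially the same route as the paper's. The only organizational difference is that the paper first peels off one factor of $x-x_0$ using $\frac{z-x_0}{z-x}=1+\frac{x-x_0}{z-x}$ together with $D^0_{x_0}(g-g_j)=0$, and then applies the factorization lemma with parameter $t$ (invoking $D^1,\dots,D^t$), whereas you substitute $k=\frac{(z-x_0)^t k_t}{t!}$ at the outset and apply the factorization lemma with parameter $t+1$, invoking all of $D^0,\dots,D^t$ in a single pass; the resulting remainder term and the H\"older estimate are identical.
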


\begin{proof}

\bigskip

 Let $x = x_0 +sh$. Then $x$ belongs to $E$ and the hypotheses of Lemma \ref{Bishop1} are satisfied, so $k_x dA$ is a representing measure for $x$. Thus




\[ |g(x)-g_j(x)| = \dfrac{1}{|c|}\left| \int [g(z)-g_j(z)] \left(\dfrac{z-x_0}{z-x} \right)k(z) dA_z\right| \]

\bigskip

\noindent Since $D^0_{x_0} [g(z)-g_j(z)] = 0$, it follows that $\int [g(z)-g_j(z)] k(z) dA_z = 0$. Then since $\dfrac{z-x_0}{z-x} = 1 + \dfrac{x-x_0}{z-x} $, we obtain that

\begin{dmath}
\label{eq2.5}
|g(x)-g_j(x)| 
=  \dfrac{|x-x_0|}{|c|}\left| \int [g(z)-g_j(z)] \dfrac{k(z)}{z-x} dA_z\right|
\end{dmath}

\bigskip

 \noindent Applying Lemma \ref{factorization} to the $\dfrac{k(z)}{z-x}$ term in the rightmost integral in \eqref{eq2.5} shows that

 \begin{dmath}
 \label{eq3}
 |g(x)-g_j(x)|=  \dfrac{|x-x_0|}{|c|} \left|\sum_{m = 1}^{t} \int [g(z)-g_j(z)]\dfrac{(x-x_0)^{m-1}k(z)}{(z-x_0)^m }  dA_z \\+ \int [g(z)-g_j(z)] \dfrac{(x-x_0)^{t} k(z)}{(z-x)(z-x_0)^{t}}  dA_z \right|
 \end{dmath}

\bigskip

We can factor out the powers of $x-x_0$ from each integral since integration is with respect to $z$. Thus each integral in the sum is of the form $ \int [g(z)-g_j(z)] \dfrac{k(z)}{(z-x_0)^m}dA$ where $1 \leq m \leq t$. This integral simplifies to $ \int [g(z)-g_j(z)] \dfrac{(z-x_0)^{t-m} k_t(z)}{t!} dA_z$ and by Lemma \ref{Wilken2}, the integral reduces to a constant times the $m$-th order bounded point derivation of $g(z)-g_j(z)$, which is $0$ for $1 \leq m \leq t$. Hence $|g(x)-g_j(x)| = \displaystyle \dfrac{|x-x_0|}{|c|t!} \left|\int [g(z)-g_j(z)] \dfrac{(x-x_0)^{t}k_t(z)}{(z-x)}  dA_z \right|$ which simplifies to  $\displaystyle \dfrac{|x-x_0|^t}{|c|t!} \left|\int [g(z)-g_j(z)] \dfrac{(x-x_0)k_t(z)}{(z-x)}  dA_z \right| $. Finally by H\"{o}lder's inequality,

 \[ \dfrac{|x-x_0|^t}{|c|t!} \left|\int [g(z)-g_j(z)] \dfrac{(x-x_0)k_t(z)}{(z-x)}  dA_z \right| \leq \dfrac{|x-x_0|^t}{|c|t!} \left \|g-g_j \right \|_p \left \|\dfrac{(x-x_0)k_t}{(z-x)}\right \|_q\]

\bigskip

\noindent and since it follows from property $1$ of $E$ that $\left \|\dfrac{(x-x_0)k}{(z-x)(z-x_0)^{t}}\right \|_q \leq \delta_0$, there is a constant $C$ that does not depend on $h$ or $j$ such that

\[|g(x) - g_j(x)| \leq C |x-x_0|^t ||g-g_j||_p.\]

\bigskip

 \noindent Since $x = x_0 +sh$, it follows that $ |g(x_0+sh) - g_j(x_0+sh)| \leq C |s|^t |h|^t ||g-g_j||_p$ and thus 

\[\dfrac{|g(x_0+sh)-g_j(x_0+sh)|}{|h|^t}  \leq C  ||g-g_j||_p\]

\end{proof}

\bigskip

\section*{Acknowledgements}

I am grateful to Professor James Brennan for introducing me to these bounded point derivation problems and for the valuable assistance that he gave me in the preparation of this paper. I would also like to thank the referee for providing helpful comments and suggestions for improving this paper.


\end{document}